\documentclass[10pt]{article}
\usepackage{amsmath, amsthm}\usepackage{enumerate}
\usepackage{amssymb}\usepackage{bbold}
\usepackage{color}
\newtheorem{theorem}{Theorem}%[section]
\newtheorem{definition}{Definition}%[section] 
\newtheorem{exam}{Example}%[section]
%[section]

%[section]
%[section]
\newtheorem{lem}{Lemma}%[section]
%[section]
%[section]
%[section]

\DeclareMathOperator{\convo}{\xrightarrow[]{o}}
\DeclareMathOperator{\convso}{\xrightarrow[]{so}}

\DeclareMathOperator{\convn}{\xrightarrow[]{\|\cdot\|}}
\DeclareMathOperator{\convsn}{\xrightarrow[]{\|\cdot\|_\infty}}

\makeatletter
\renewcommand{\subsection}{\@startsection{subsection}{1}
{0pt}{3.25ex plus 1ex minus.2ex}{-1em}{\normalfont\normalsize\bf}}
\makeatother
\begin{document}

%\title{{\bf On (quasi-)KB/Levi operators in Banach lattices}}
\title{{\bf On KB and Levi operators in Banach lattices}}
\date{}
\maketitle
\author{\centering{{Eduard Emelyanov$^{1}$\\ 
\small $1$ Sobolev Institute of Mathematics, Novosibirsk, Russia}

\abstract{We prove that an order continuous Banach lattice $E$ is a \text{\rm KB}-space 
if and only if each positive compact operator on $E$ is a \text{\rm KB} operator. We give
conditions on quasi-\text{\rm KB} (resp., quasi-Levi) operators to be \text{\rm KB} 
(resp., Levi), study norm completeness and domination for these operators,
and show that neither \text{\rm KB} nor \text{\rm Levi} operators are stable
under rank one perturbations.

{\bf{Keywords:}} 
{\rm Banach lattice, KB operator, Levi operator}\\

{\bf MSC2020:} {\rm 46A40, 46B42, 47L05}
\large

\bigskip

%%%%%%%%%%%%%%%%%%%%
\section{Introduction}
%%%%%%%%%%%%%%%%%%%%

Several operator versions of Banach lattice properties 
like to be a KB-space were  
studied recently (see, for example \cite{AlEG,BA,GE,ZC}). 
They emerge via redistribution of topological 
and order properties between the domain and range. As 
the order convergence is not topological in infinite 
dimensional vector lattices \cite{DEM}, important 
operator versions use to occur if order and 
norm convergences are both involved. We investigate
(quasi-) \text{\rm KB} and (quasi-) Levi operators.

Throughout the paper vector spaces are real and operators are linear and bounded. 
Letters $X$ and $Y$ stand for a Banach spaces, $E$ and $F$ for Banach lattices,
$\text{\rm L}(X,Y)$ (resp., $\text{\rm L}_{\text{\rm FR}}(X,Y)$, \text{\rm K}(X,Y), 
\text{\rm W}(X,Y)) for the space of all bounded 
(resp., finite rank, compact, weakly compact) operators from $X$ to $Y$, $I_X$ for 
the identity operator in $X$, and $\text{\rm L}_r(E,F)$ for the space of regular 
operators in $\text{\rm L}(E,F)$.
We write $y_\alpha\downarrow 0$ for a net $(y_\alpha)$ in $E$, whenever 
$y_{\alpha'}\le y_\alpha$ for all $\alpha'\ge\alpha$ and $\inf_E y_\alpha=0$.
In the present paper, we use the following definition of order convergence.

\begin{definition}\label{order convergence} 
{\em
Let $(x_\alpha)$ be a net in $E$ and $x\in E$.
\begin{enumerate}[$a)$]
\item\ 
$(x_\alpha)$ order converges to $x$ 
(briefly, $x_\alpha\stackrel{\text{\rm o}}{\to} x$) 
whenever there exists a net $(y_\beta)$ in $E$ such that 
$y_\beta\downarrow 0$ and, for each $\beta$ 
there is an $\alpha_\beta$ with $|x_\alpha-x|\le y_\beta$ for
all $\alpha\ge\alpha_\beta$. 
\item\ 
$(x_\alpha)$ strong order converges to $x$ 
($x_\alpha\stackrel{\text{\rm so}}{\to} x$) 
whenever $|x_\alpha-x|\le y_\alpha\downarrow 0$
for some net $(y_\alpha)$ in $E$. 
\end{enumerate}
}
\end{definition}
\noindent
Despite our definition of order convergence is standard now-days, it differs with one 
used in the textbooks \cite{AB,Mey}, where the so-convergence was called
order convergence. Indeed, so-convergence is equivalent to o-convergence 
in Dedekind complete vector lattices (see \cite[Prop.1.5]{AS}).  

\medskip
The next definition is an adopted variant of \cite[Def.1.1]{AlEG}, 
whereas item $e)$ is new.

\begin{definition}\label{order-to-topology} 
{\em
An operator $T\in\text{\rm L}(E,Y)$ is:
\begin{enumerate}[$a)$]
\item   
\text{\rm KB} (\text{\rm $\sigma$-KB}) if, for every
increasing net (sequence) $(x_\alpha)$ in $(B_E)_+$,
there exists an $x\in E$ such that $Tx_\alpha\stackrel{\|\cdot\|}{\to}Tx$.
The set of all such operators is denoted by $\text{\rm L}_{\text{\rm KB}}(E,Y)$ 
(resp., $\text{\rm L}^\sigma_{\text{\rm KB}}(E,Y)$).
\item   
\text{\rm quasi-KB} (\text{\rm quasi-$\sigma$-KB}) if $(Tx_\alpha)$
is norm convergent for every increasing net (sequence) $(x_\alpha)$ in $(B_E)_+$.
The set of all such operators is denoted by $\text{\rm L}_{\text{\rm qKB}}(E,Y)$ 
(resp., $\text{\rm L}^\sigma_{\text{\rm qKB}}(E,Y)$).
\end{enumerate}}
{\em
\noindent
An operator $T\in\text{\rm L}(E,F)$ is:
\begin{enumerate}[]
\item[$c)$] \  
\text{\rm Levi} (\text{\rm $\sigma$-Levi}) if, for every 
increasing net (sequence) $(x_\alpha)$ in $(B_E)_+$,
there exists an $x\in E$ with $Tx_\alpha\stackrel{\text{\rm o}}{\to} Tx$.
The set of all such operators is denoted by $\text{\rm L}_{\text{\rm Levi}}(E,F)$ 
(resp., $\text{\rm L}^\sigma_{\text{\rm Levi}}(E,F)$).
\item[$d)$] \  
\text{\rm quasi-Levi} (\text{\rm quasi-$\sigma$-Levi}) if $T$ takes 
increasing nets (sequences) in $(B_E)_+$ to o-Cauchy nets.
The set of all such operators is denoted by $\text{\rm L}_{\text{\rm qLevi}}(E,F)$ 
(resp., $\text{\rm L}^\sigma_{\text{\rm qLevi}}(E,F)$).
\item[$e)$] \  
\text{\rm complete-quasi-Levi} (\text{\rm complete-quasi-$\sigma$-Levi}) 
if $(Tx_\alpha)$ is order convergent for every increasing net 
(sequence) $(x_\alpha)$ in $(B_E)_+$. The set of all such operators 
is denoted by $\text{\rm L}_{\text{\rm c-qLevi}}(E,F)$ 
(resp., $\text{\rm L}^\sigma_{\text{\rm c-qLevi}}(E,F)$).
\end{enumerate}}
\end{definition}
\medskip
\noindent
Quasi-\text{\rm KB} operators were called \text{\rm KB} operators in \cite{BA,GE}.
They agree with b-weakly compact operators of \cite{AA} (see \cite[Prop.2.1]{ZC}).
By \cite[Prop.1.2]{AlEG}, 
$\text{\rm L}^\sigma_{\text{\rm qKB}}(E,Y)=\text{\rm L}_{\text{\rm qKB}}(E,Y)$.
Clearly, 
$$
\begin{matrix}
   \text{\rm L}_{\text{\rm Levi}}(E,F) & \subseteq &   \text{\rm L}_{\text{\rm c-qLevi}}(E,F)
                      & \subseteq &   \text{\rm L}_{\text{\rm qLevi}}(E,F) \\
 \text{\small $|\bigcap$}  &  & \text{\small $|\bigcap$} &   & \text{\small $|\bigcap$}  \\
  \text{\rm L}^\sigma_{\text{\rm Levi}}(E,F) &\subseteq &  \text{\rm L}^\sigma_{\text{\rm c-qLevi}}(E,F)    & \subseteq &   \text{\rm L}^\sigma_{\text{\rm qLevi}}(E,F).
\end{matrix}
$$
While replaced o-convergence by so-convergence, the comp\-lete-quasi-($\sigma$-) Levi 
operators turn to ($\sigma$-) Levi operators of Zhang and Chen \cite{ZC}.
As so-convergence implies o-conver\-gence, then each ($\sigma$-) Levi 
operator  $T:E\to F$ of \cite{ZC} is complete-quasi-($\sigma$-) Levi in the sense of
Definition \ref{order-to-topology}. By \cite[Prop.1.5]{AS}, the converse is true
when $F$ is Dedekind complete. 
 
There are $\sigma$-Levi operators which are not even quasi-Levi.
Indeed, consider a Banach lattice $E=\ell^\infty_\omega(\Gamma)$ of 
real-valued bounded countably
supported functions on an uncountable set $\Gamma$. 
Clearly $I_E\in\text{\rm L}^\sigma_{\text{\rm Levi}}(E)$. Take a bounded 
increasing net $(1_\alpha)_{\alpha\in{\cal P}_{fin}(\Gamma)}$, where
${\cal P}_{fin}(\Gamma)$ is the set of all finite subsets of $\Gamma$
directed by inclusion, and $1_\alpha$  is the indicator function of 
$\alpha$. Since the net $(1_\alpha)$ is not o-Cauchy, 
$I_E\notin\text{\rm L}_{\text{\rm qLevi}}(E)$  
(cf. also \cite[Ex.2.6]{ZC}). 

The next example shows that a quasi-KB (quasi-Levi) operator 
doesn't have to be \text{\rm KB} (resp., complete-quasi-Levi).

\begin{exam}\label{quasi KB yet not KB}%%%OK
{\em
First we show that the inclusion $\text{\rm L}_{\text{\rm KB}}(E)\subseteq\text{\rm L}_{\text{\rm qKB}}(E)$
can be proper.
\begin{enumerate}[$a)$]
\item
Let $E=C[0,1]\oplus L_1[0,1]$. Define an operator $T\in\text{\rm L}(E)$ by
$T\bigl((\phi,\psi)\bigl):=(0,\phi)$ for $\phi\in C[0,1]$ and $\psi\in L_1[0,1]$. 
Clearly, $T\in\text{\rm L}_{\text{\rm qKB+}}(E)$. But $T$ is not even \text{\rm $\sigma$-KB}.
Indeed, consider a continuous function $\phi_n$ that equals 
to 1 on $\bigl[0,\frac{1}{2}-\frac{1}{2^n}\bigl]$,
to 0 on $\bigl[\frac{1}{2},1\bigl]$, and is linear otherwise.
Let $f_n:=(\phi_n,0)$. Then $(B_E)_+\ni f_n\uparrow$
and $Tf_n\convn(0,g)$, where $g\in L_1[0,1]$ is the indicator 
function of $\bigl[0,\frac{1}{2}\bigl]$.
Since $g\not\in C[0,1]$, there is no element $f\in E$ satisfying
$Tf=(0,g)$, and hence $T\notin\text{\rm L}^\sigma_{\text{\rm KB}}(E)$.
\end{enumerate}}
{\em
\noindent
Next we show that the inclusion 
$\text{\rm L}_{\text{\rm c-qLevi}}(E)\subseteq\text{\rm L}_{\text{\rm qLevi}}(E)$
can also be proper.
\begin{enumerate}[$b)$]
\item
Consider the operator $I_c$,
where $c$ is the Banach lattice of all convergent real sequences. 
Since each bounded positive increasing 
net in $c$ is \text{\rm o}-Cauchy, $I_c\in\text{\rm L}_{\text{\rm qLevi}}(c)$.
Denote elements of $c$ by $\sum_{n=1}^\infty a_n \cdot e_n$,
where $e_n$ is the n-th unit vector of $c$ and $(a_n)$ converges in $\mathbb{R}$.
A bounded increasing sequence $I_cf_n=f_n=\sum_{k=1}^n e_{2k-1}$ in $c_+$ is not 
o-convergent. Thus, $I_c\notin\text{\rm L}^\sigma_{\text{\rm c-qLevi}}(c)$.
\end{enumerate}}
\end{exam}

\medskip
We shall use the following two elementary lemmas.

\begin{lem}\label{prop2}%%%OK
\begin{enumerate}[$i)$]
The following holds.
\item
$\text{\rm L}_{\text{\rm qKB}}(E,Y)$, $\text{\rm L}_{\text{\rm c-qLevi}}(E,F)$,
and $\text{\rm L}_{\text{\rm qLevi}}(E,F)$ are vector spaces.
\item
If $E$ is a \text{\rm KB}-space then
$\text{\rm L}_{\text{\rm KB}}(E,Y)=\text{\rm L}(E,Y)$.
\item
$\text{\rm L}_{\text{\rm KB+}}(E,F)\subseteq\text{\rm L}_{\text{\rm Levi+}}(E,F)$.
\item
$\text{\rm L}_{\text{\rm qKB+}}(E,F)\subseteq\text{\rm L}_{\text{\rm c-qLevi+}}(E,F)$.
\item
$\text{\rm K}_+(E,F)\subseteq\text{\rm L}_{\text{\rm qKB+}}(E,F)$.
\end{enumerate}
\end{lem}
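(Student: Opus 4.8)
The plan is to prove the five items essentially independently, using two elementary facts repeatedly. The first is the triangle inequality $|u_\alpha+v_\alpha-u-v|\le|u_\alpha-u|+|v_\alpha-v|$ together with the observation that if $p_\beta\downarrow 0$ and $q_\gamma\downarrow 0$ then the product-indexed net $(p_\beta+q_\gamma)_{(\beta,\gamma)}$ satisfies $(p_\beta+q_\gamma)\downarrow 0$ (translation invariance of $\inf$ gives $\inf_\beta(p_\beta+q_\gamma)=q_\gamma$, hence $\inf_{(\beta,\gamma)}(p_\beta+q_\gamma)=\inf_\gamma q_\gamma=0$). This shows that sums of \text{\rm o}-convergent (resp., \text{\rm o}-Cauchy, norm convergent) nets are again of the same type; combined with the trivial behaviour under scalar multiples, this yields item $i)$.

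The second fact is: if $(y_\alpha)$ is an \emph{increasing} net in $F$ with $y_\alpha\convn y$, then $y=\sup_\alpha y_\alpha$ --- indeed $y$ dominates each $y_\alpha$ because the positive cone is norm closed, and any other upper bound $z$ satisfies $z\ge y$ for the same reason --- so $0\le y-y_\alpha\downarrow 0$ and therefore $y_\alpha\stackrel{\text{\rm so}}{\to} y$, in particular $y_\alpha\stackrel{\text{\rm o}}{\to} y$. When $T\ge 0$ the net $(Tx_\alpha)$ is increasing whenever $(x_\alpha)$ is, so this immediately gives $iii)$ and $iv)$: for $iii)$, if $T\in\text{\rm L}_{\text{\rm KB+}}(E,F)$ and $(B_E)_+\ni x_\alpha\uparrow$, then $Tx_\alpha\convn Tx$ for some $x\in E$, whence $Tx_\alpha\stackrel{\text{\rm o}}{\to} Tx$ and $T$ is Levi with the same witness $x$; for $iv)$, if $T\in\text{\rm L}_{\text{\rm qKB+}}(E,F)$ then $(Tx_\alpha)$ is increasing and norm convergent in $F$, hence \text{\rm o}-convergent, so $T$ is complete-quasi-Levi.

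For item $v)$, let $T\in\text{\rm K}_+(E,F)$ and $(B_E)_+\ni x_\alpha\uparrow$. The net $(Tx_\alpha)$ lies in the norm-relatively-compact set $\overline{T(B_E)}$, so it admits a subnet $Tx_{\alpha_\gamma}\convn y$ for some $y\in F$. Since $(Tx_\alpha)$ is increasing, passing to the norm limit along the subnet gives $y\ge Tx_{\alpha_0}$ for every $\alpha_0$; hence $0\le y-Tx_\alpha$ is a decreasing net, so $\|y-Tx_\alpha\|$ is decreasing, and since a subnet of it tends to $0$ we get $\|y-Tx_\alpha\|\to 0$. Thus $(Tx_\alpha)$ is norm convergent and $T\in\text{\rm L}_{\text{\rm qKB+}}(E,F)$.

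Finally, item $ii)$ carries the only genuine (and still mild) point: the defining property of a \text{\rm KB}-space must be upgraded from sequences to nets. The plan is to show that a non-norm-Cauchy increasing net in $(B_E)_+$ would produce a non-norm-Cauchy increasing \emph{sequence} in $E_+$: if, for some $\varepsilon>0$, above every index there are $\beta\le\gamma$ with $\|x_\gamma-x_\beta\|\ge\varepsilon$, then recursively (setting $\alpha_{n+1}:=\gamma_n$) one selects $\alpha_1\le\beta_1\le\gamma_1\le\beta_2\le\gamma_2\le\cdots$ with $\|x_{\gamma_n}-x_{\beta_n}\|\ge\varepsilon$, contradicting that $E$ is a \text{\rm KB}-space. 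Hence every increasing net in $(B_E)_+$ is norm convergent in $E$, and for any $T\in\text{\rm L}(E,Y)$ continuity gives $Tx_\alpha\convn Tx$, so $\text{\rm L}_{\text{\rm KB}}(E,Y)=\text{\rm L}(E,Y)$. The main obstacle is precisely this net/sequence passage in $ii)$ (one may instead invoke the characterisation of \text{\rm KB}-spaces as bands in the bidual) and, to a lesser degree, the subnet-to-whole-net upgrade in $v)$; the remaining steps are bookkeeping with the triangle inequality, monotonicity of the norm, and closedness of the positive cone.
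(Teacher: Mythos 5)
Your proof is correct and follows essentially the same route as the paper: items $iii)$ and $iv)$ rest on the fact that an increasing norm-convergent net order converges to the same limit (which the paper states without proof and you verify via norm-closedness of the positive cone), and item $v)$ uses exactly the paper's compactness-plus-monotonicity subnet argument. The only place you add substance is item $ii)$, where the paper says ``trivial'' and you supply the standard net-to-sequence reduction showing that a \text{\rm KB}-space norm-converges every increasing net in $(B_E)_+$; this filling-in is accurate (modulo a harmless $\varepsilon/2$ when reducing to comparable index pairs).
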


\begin{proof}
Items $i)$ and $ii)$ are trivial. Items $iii)$ and $iv)$
follow from the fact that each norm convergent increasing net in F converges 
in order to the same limit. 

$v)$\ 
Let $T\in\text{\rm K}_+(E,F)$ and let $(x_\alpha)$ be an 
increasing net in $(B_E)_+$. Then $(Tx_\alpha)$ has a subnet 
$(Tx_{\alpha_\beta})$ satisfying $\left\|Tx_{\alpha_\beta}-y\right\|\to 0$ 
for some $y \in F$. Since $Tx_\alpha\uparrow$ then $Tx_\alpha\convn y$. 
Therefore $T\in\text{\rm L}_{\text{\rm qKB+}}(E,F)$.
\end{proof}
\noindent
By Lemma \ref{prop2}, $\text{\rm L}_{\text{\rm KB}}(E,Y)$ is complete in 
the operator norm for each \text{\rm KB}-space $E$, and by Theorem \ref{qKB-Banach-space}, 
$\text{\rm L}_{\text{\rm qKB}}(E,Y)$ is a norm-complete vector space.
Example \ref{KB-not-compl} shows that $\text{\rm L}_{\text{\rm KB}}(c_0)$ 
and $\text{\rm L}_{\text{\rm Levi}}(c_0)$ are not norm-complete. 

\medskip
In general, neither 
$\text{\rm L}_{\text{\rm KB}}(E,Y)$ nor $\text{\rm L}_{\text{\rm Levi}}(E,F)$ are
closed under the addition (see Examples \ref{KB are not VS} and \ref{Levi are not VS}). 
Coupled with Lemma \ref{prop2}, the next lemma gives some 
positive information in this direction.

\begin{lem}\label{prop1}%%%OK
\begin{enumerate}[$i)$]
The following holds.
\item
$\text{\rm L}_{\text{\rm FR}}(E,Y)\subseteq\text{\rm L}_{\text{\rm KB}}(E,Y)$.
\item
$\text{\rm L}_{\text{\rm FR}}(E,F)\subseteq\text{\rm L}_{\text{\rm Levi}}(E,F)$.
\end{enumerate}
\end{lem}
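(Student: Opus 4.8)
The plan is to use the explicit structure of a finite rank operator and reduce both statements to convergence of scalar nets. Write $T\in\text{\rm L}_{\text{\rm FR}}(E,Y)$ as $Tx=\sum_{k=1}^{n}f_k(x)y_k$ with $f_k\in E^{*}$, choosing the representation so that $y_1,\dots,y_n$ are linearly independent; then $\mathrm{span}\{y_1,\dots,y_n\}$ is exactly the range of $T$ (take a basis of $\mathrm{ran}\,T$ and the corresponding coordinate functionals composed with $T$). Since $E$ is a Banach lattice, $E^{*}=E^{\sim}$, so each $f_k$ is regular, say $f_k=f_k^{+}-f_k^{-}$ with $f_k^{\pm}\ge 0$. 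Now fix an increasing net $(x_\alpha)$ in $(B_E)_{+}$. Then each $\bigl(f_k^{\pm}(x_\alpha)\bigr)$ is an increasing net of reals bounded above by $\|f_k^{\pm}\|$, hence convergent, so $f_k(x_\alpha)\to c_k$ for some $c_k\in\mathbb{R}$ and each $k$.

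For part $i)$ I would then observe that $Tx_\alpha=\sum_{k=1}^{n}f_k(x_\alpha)y_k\stackrel{\|\cdot\|}{\to}\sum_{k=1}^{n}c_ky_k=:z$. By the choice of the representation $z$ lies in $\mathrm{span}\{y_1,\dots,y_n\}=\mathrm{ran}\,T$, hence $z=Tx$ for some $x\in E$, and therefore $Tx_\alpha\stackrel{\|\cdot\|}{\to}Tx$, which is precisely the defining property of $\text{\rm L}_{\text{\rm KB}}(E,Y)$.

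For part $ii)$, with $F$ a Banach lattice, the same $z=Tx$ is the candidate order limit, and the point is that each summand already o-converges. Indeed $|f_k(x_\alpha)y_k-c_ky_k|=|f_k(x_\alpha)-c_k|\,|y_k|$, and since $F$ is Archimedean the sequence $(m^{-1}|y_k|)_{m\in\mathbb{N}}$ decreases to $0$; for each $m$ there is $\alpha_m$ with $|f_k(x_\alpha)-c_k|\le m^{-1}$ for all $\alpha\ge\alpha_m$, so $f_k(x_\alpha)y_k\stackrel{\text{\rm o}}{\to}c_ky_k$. As o-convergence of nets is stable under finite linear combinations, $Tx_\alpha=\sum_{k=1}^{n}f_k(x_\alpha)y_k\stackrel{\text{\rm o}}{\to}\sum_{k=1}^{n}c_ky_k=z=Tx$, so $T\in\text{\rm L}_{\text{\rm Levi}}(E,F)$.

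The only genuinely delicate point is that the limit $z$ must lie in $\mathrm{ran}\,T$ in order to produce the required $x\in E$; this is why I insist from the outset on a linearly independent system $\{y_k\}$, after which everything else is routine. A secondary point to verify carefully is the linearity of o-convergence in the sense of Definition~\ref{order convergence}~$a)$ — one passes to a common cofinal refinement of the finitely many dominating index sets — but this is standard.
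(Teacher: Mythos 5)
Your proposal is correct and follows essentially the same route as the paper: decompose each $f_k$ into $f_k^{+}-f_k^{-}$ (using that bounded functionals on a Banach lattice are regular), observe that the resulting increasing bounded scalar nets converge, and conclude that the limit lies in the finite-dimensional range of $T$, with the Archimedean argument supplying the o-convergence in part $ii)$. Your insistence on a linearly independent system $\{y_k\}$ spanning $\operatorname{ran}T$ and your explicit verification that coefficientwise convergence yields o-convergence are slightly cleaner than the paper's identification of $TE$ with $\mathbb{R}^n$, but the underlying argument is the same.
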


\begin{proof}
$i)$\
Let $T:E\to Y$ be a finite rank operator. 
WLOG, we may identify $Y$ with $TE$ and the latter with $\mathbb{R}^n$. Then 
$$
   T = \sum_{k=1}^n f_k \otimes y_k 
   \  \text{for} \   y_1, \dots, y_n \in \mathbb{R}^n
   \ \text{and} \   f_1, \dots, f_n \in E'.
$$
Since $E'$ is Dedekind complete, functionals $f_1,...,f_n$ are regular, and hence
$$
   T = \sum_{k=1}^n (f_k^+ \otimes y_k^+ + f_k^- \otimes y_k^-)-
   \sum_{k=1}^n (f_k^+ \otimes y_k^- + f_k^- \otimes y_k^+).
$$
Let $(x_\alpha)$ be an increasing net in $(B_E)_+$. The nets
$$
   T_1x_\alpha = \Bigl(\sum_{k=1}^n (f_k^+ \otimes y_k^+ + f_k^- \otimes y_k^-)\Bigl)(x_\alpha)
$$
and 
$$
   T_2x_\alpha = \Bigl(\sum_{k=1}^n (f_k^+ \otimes y_k^- + f_k^- \otimes y_k^+))\Bigl)(x_\alpha)
$$
are both increasing and bounded in $\mathbb{R}^n=TE$. 
Then $T_1x_\alpha\convn z_1$ and $T_2x_\alpha\convn z_2$ for some $z_1,z_2\in TE$.
Thus
$$
   Tx_\alpha=(T_1x_\alpha-T_2x_\alpha)\convn(z_1-z_2)\in TE-TE=TE.
$$
Pick an $x\in E$ such that $Tx=z_1-z_2$. Then $Tx_\alpha\convn Tx$.
We conclude $T\in\text{\rm L}_{\text{\rm KB}}(E,Y)$.

\medskip
$ii)$\
Let $T\in\text{\rm L}_{\text{\rm FR}}(E,F)$, say
$$
   T = \sum_{k=1}^n f_k \otimes y_k 
   \  \text{for} \   y_1, \dots, y_n \in TE
   \ \text{and} \   f_1, \dots, f_n \in E'.
$$
Denote
$$
   T_1:=\sum_{k=1}^n f_k^+ \otimes y_k \ \ \text{\rm and} \ \ 
   T_2:=\sum_{k=1}^n f_k^- \otimes y_k. 
$$
Let $(x_\alpha)$ be an increasing net in $(B_E)_+$. The nets $f_k^+(x_\alpha)$ 
and $f_k^-(x_\alpha)$ are increasing and bounded in $\mathbb{R}_+$. 
Thus, $f_k^+(x_\alpha)\to a_k\in\mathbb{R}_+$ and 
$f_k^-(x_\alpha)\to b_k\in\mathbb{R}_+$, and hence
$$
   T_1x_\alpha=\sum_{k=1}^n f_k^+(x_\alpha) y_k\convn\sum_{k=1}^n a_k y_k
$$
and
$$
   T_2x_\alpha=\sum_{k=1}^n f_k^-(x_\alpha) y_k\convn\sum_{k=1}^n b_k y_k.
$$
Since $\dim(TE)<\infty$ then
$$
   T_1x_\alpha\convo\sum_{k=1}^n a_k y_k\ \ \ \text{\rm and} \ \ \ 
   T_2x_\alpha\convo\sum_{k=1}^n b_k y_k,
$$
and hence
$Tx_\alpha=(T_1x_\alpha-T_2x_\alpha)\convo\sum_{k=1}^n(a_k-b_k)y_k\in TE$.
Take an $x\in E$ such that $Tx=\sum_{k=1}^n(a_k-b_k)y_k$. Then $Tx_\alpha\convo Tx$.
We conclude $T\in\text{\rm L}_{\text{\rm Levi}}(E,F)$.
\end{proof}

\medskip

\noindent
The next example strengthens Example \ref{quasi KB yet not KB}  
by showing that even the inclusions 
$$
   \text{\rm L}_{\text{\rm KB}}(E)\cap\text{\rm K}_+(E)\subseteq
   \text{\rm L}_{\text{\rm qKB}}(E)\cap\text{\rm K}_+(E),
$$
$$
   \text{\rm L}_{\text{\rm Levi}}(E)\cap\text{\rm K}_+(E)\subseteq
   \text{\rm L}_{\text{\rm c-qLevi}}(E)\cap\text{\rm K}_+(E)
$$
are proper in general.

\begin{exam}\label{Example1 c_0}%%%OK
{\em
Let $(\alpha_n)_{n=1}^\infty\in c_0\setminus c_{00}$. Define $T\in\text{\rm L}(c_0)$ by
$$
   T\left( \sum_{n=1}^\infty a_n e_n\right)  = \sum_{n=1}^\infty(\alpha_n a_n) e_n. 
   \eqno(1)
$$
Then $T\in\text{\rm K}_+(c_0)$, and hence 
$T\in\text{\rm L}_{\text{\rm qKB}}(c_0)\bigcap\text{\rm L}_{\text{\rm c-qLevi}}(c_0)$
by Lemma \ref{prop2}. 
Take an increasing sequence 
$x_n: = \sum\limits_{k=1}^n e_k$ in $(B_{c_0})_+$. The sequence 
$(Tx_n) =\Bigl(\sum\limits_{k=1}^n\alpha_k e_k\Bigl)$ converges in norm
and in order to $\sum\limits_{k=1}^{\infty}\alpha_ke_k\in c_0$, 
however there is no element $\sum\limits_{k=1}^\infty a_k e_k$ of $c_0$ with
$T\left(\sum\limits_{k=1}^\infty a_k e_k \right)=\sum\limits_{k=1}^\infty\alpha_ke_k$.
Indeed, would such an element $x=\sum\limits_{k=1}^\infty a_k e_k\in c_0$ with
$Tx=\sum\limits_{k=1}^\infty\alpha_ke_k$ exist, it must satisfies $a_k=1$ for all $k$
such that $\alpha_k\ne 0$, which is absurd. 
Therefore $T$ is neither \text{\rm KB} nor $\sigma$-Levi operator.

Furthermore, slightly modified arguments work also for operators $T\in\text{\rm L}(c)$
and $T\in\text{\rm L}(c,c_0)$ defined by the same formula (1) for 
a null sequence $(\alpha_n)$ in $\mathbb{R}$ consisting of non-zero elements.
To see this, it suffices to replace $x_n=\sum\limits_{k=1}^n e_k$ in $(B_{c_0})_+$ by 
the increasing sequence
$x_n=\sum\limits_{k=1}^n e_{2k}$ in $(B_{c})_+$.
}
\end{exam}

\medskip
\noindent
Example \ref{Example1 c_0} is serving also a counter-example 
to \cite[Prop.3.5]{AlEG}, where it is faultily claimed that every weakly 
compact positive operator between Banach lattices is Levi.

\medskip
The paper is organized as follows. In Section 2 we study (quasi-) \text{\rm KB} operators 
and their relations with (weakly) compact operators. Section 3 is devoted to (quasi-) Levi operators.

We refer the reader to Aliprantis and Burkinshaw \cite{AB} and Meyer-Nieberg
\cite{Mey} for unexplained terminology and notation concerning Banach lattices.

%%%%%%%%%%%%%%%%%%%%
\section{KB operators}
%%%%%%%%%%%%%%%%%%%%

As $\text{\rm L}_{\text{\rm KB}}(E,Y)=\text{\rm L}(E,Y)$ for every \text{\rm KB}-space $E$,
\text{\rm KB} operators are not necessarily order boun\-ded (see, for example \cite[Exer.10, p.289]{AB}).
Positive \text{\rm KB} operators need not to be weakly compact 
(e.g., $I_{E}\in\text{\rm L}_{\text{\rm KB+}}(\ell^1)\setminus\text{\rm W}(\ell^1)$), whereas  
Example \ref{Example1 c_0} presents a positive compact operator on $c_0$ that is not \text{\rm KB}.
In this section, after brief discussion of general properties of (quasi-) \text{\rm KB} operators,
we study some relations between them and (weak) compact operators. 
We show that \text{\rm KB} operators need not to be closed under the addition.
In the end of the section, we discuss the domination property for (quasi-) \text{\rm KB} operators. 

\subsection{}
We show that quasi-\text{\rm KB} operators are norm-closed. 

\begin{lem}\label{L2}%%%OK
Let $\text{\rm L}_{\text{\rm qKB}}(E,Y)\ni 
T_\gamma\stackrel{\|\cdot\|}{\longrightarrow}T\in\text{\rm L}(E,Y)$. 
Then $T\in\text{\rm L}_{\text{\rm qKB}}(E,Y)$.
\end{lem}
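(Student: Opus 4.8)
The plan is to verify the defining property of $\text{\rm L}_{\text{\rm qKB}}(E,Y)$ directly: given an arbitrary increasing net $(x_\alpha)$ in $(B_E)_+$, I must show that $(Tx_\alpha)$ is norm-convergent in $Y$. Since $Y$ is a Banach space, it suffices to show that $(Tx_\alpha)$ is norm-Cauchy. First I would fix $\varepsilon>0$ and use the norm-convergence $T_\gamma\to T$ in $\text{\rm L}(E,Y)$ to pick an index $\gamma$ with $\|T_\gamma-T\|<\varepsilon$. Then for any pair of indices $\alpha,\alpha'$ in the directed set, a triangle-inequality estimate gives
$$
\|Tx_\alpha-Tx_{\alpha'}\|\le\|Tx_\alpha-T_\gamma x_\alpha\|+\|T_\gamma x_\alpha-T_\gamma x_{\alpha'}\|+\|T_\gamma x_{\alpha'}-Tx_{\alpha'}\|\le 2\varepsilon+\|T_\gamma x_\alpha-T_\gamma x_{\alpha'}\|,
$$
using $\|x_\alpha\|\le 1$, $\|x_{\alpha'}\|\le 1$ and $\|T_\gamma-T\|<\varepsilon$.

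Next, since $T_\gamma\in\text{\rm L}_{\text{\rm qKB}}(E,Y)$ and $(x_\alpha)$ is an increasing net in $(B_E)_+$, the net $(T_\gamma x_\alpha)$ is norm-convergent, hence norm-Cauchy; so there is an index $\alpha_0$ with $\|T_\gamma x_\alpha-T_\gamma x_{\alpha'}\|<\varepsilon$ for all $\alpha,\alpha'\ge\alpha_0$. Combining this with the displayed estimate yields $\|Tx_\alpha-Tx_{\alpha'}\|\le 3\varepsilon$ for all $\alpha,\alpha'\ge\alpha_0$. As $\varepsilon>0$ was arbitrary, $(Tx_\alpha)$ is norm-Cauchy in $Y$, hence norm-convergent. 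Since $(x_\alpha)$ was an arbitrary increasing net in $(B_E)_+$, this shows $T\in\text{\rm L}_{\text{\rm qKB}}(E,Y)$.

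This is essentially a routine ``uniform limit of a property tested on a fixed bounded family'' argument, and I do not anticipate a genuine obstacle. The only point requiring a little care is that the approximating estimate must be uniform over the \emph{whole} net $(x_\alpha)$ — which is exactly why operator-norm convergence (rather than, say, strong convergence) of $T_\gamma\to T$ is used, together with the uniform bound $\|x_\alpha\|\le 1$. One should also note that $\gamma$ is chosen \emph{once} (depending only on $\varepsilon$, not on $\alpha$), after which $\alpha_0$ is chosen depending on $\gamma$ and $\varepsilon$; this ordering of the quantifiers is what makes the Cauchy conclusion valid. The same argument, verbatim with sequences in place of nets, would also show that $\text{\rm L}^\sigma_{\text{\rm qKB}}(E,Y)$ is norm-closed, consistent with the already-noted equality $\text{\rm L}^\sigma_{\text{\rm qKB}}(E,Y)=\text{\rm L}_{\text{\rm qKB}}(E,Y)$.
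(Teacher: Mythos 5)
Your proof is correct and follows essentially the same route as the paper's: choose $\gamma$ once with $\|T_\gamma-T\|<\varepsilon$, use the quasi-KB property of $T_\gamma$ to make the middle term small for $\alpha,\alpha'\ge\alpha_0$, and conclude via the triangle inequality that $(Tx_\alpha)$ is norm-Cauchy, hence convergent by completeness of $Y$. The paper's proof is the same three-term estimate with the same quantifier order.
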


\begin{proof}
Let $(x_\alpha)$ be an increasing net in $(B_{E})_+$. 
Take $\varepsilon>0$ and pick a $\gamma_0$ satisfying 
$\|T_{\gamma_0}-T\|\le\varepsilon$.
Choose an $\alpha_0$ such that 
$\|T_{\gamma_0}x_{\alpha^{'}}-T_{\gamma_0}x_{\alpha^{''}}\|\le\varepsilon$
for $\alpha^{'}, \alpha^{''}\ge\alpha_0$. Then 
$$
   \|Tx_{\alpha^{'}} - Tx_{\alpha^{''}} \| \le 
$$
$$
   \|Tx_{\alpha^{'}} - T_{\gamma_0} x_{\alpha^{'}} \|+ 
   \|T_{\gamma_0} x_{\alpha^{'}} - T_{\gamma_0} x_{\alpha^{''}}\|+ 
   \|T_{\gamma_0} x_{\alpha^{''}} - Tx_{\alpha^{''}} \| \le
$$
$$ 
   \|T_{\gamma_0} - T\| \|x_{\alpha^{'}}\| +  
   \|T_{\gamma_0} x_{\alpha^{'}} - T_{\gamma_0} x_{\alpha^{''}} \|+ 
   \|T_{\gamma_0} - T\| \|x_{\alpha^{''}}\|    \le 3\varepsilon
$$
for all $\alpha^{'}, \alpha^{''}\ge\alpha_0$, and hence $T$ is  quasi-\text{\rm KB}. 
\end{proof}

\begin{theorem}\label{qKB-Banach-space}%%%OK
The set  $\text{\rm L}_{\text{\rm qKB}}(E,Y)$ is a Banach space under the operator norm. 
\end{theorem}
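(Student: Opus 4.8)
The plan is to observe that this theorem is a packaging corollary of the two facts already in hand: that $\text{\rm L}_{\text{\rm qKB}}(E,Y)$ is a linear subspace of $\text{\rm L}(E,Y)$ (Lemma \ref{prop2}$(i)$), and that it is closed in the operator norm (Lemma \ref{L2}). So the whole argument reduces to the standard principle that a norm-closed linear subspace of a Banach space is itself a Banach space in the inherited norm.

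Concretely, I would first recall that $\text{\rm L}(E,Y)$ is complete under the operator norm, since the target $Y$ is a Banach space — this is the usual completeness of the space of bounded operators into a complete space, and it is the only external input. Next, by Lemma \ref{prop2}$(i)$, $\text{\rm L}_{\text{\rm qKB}}(E,Y)$ is a vector subspace of $\text{\rm L}(E,Y)$, so it is at least a normed space under the restricted operator norm. Finally, to get completeness, take a Cauchy sequence $(T_n)$ in $\text{\rm L}_{\text{\rm qKB}}(E,Y)$; it is Cauchy in $\text{\rm L}(E,Y)$, hence norm-converges to some $T\in\text{\rm L}(E,Y)$, and Lemma \ref{L2} then forces $T\in\text{\rm L}_{\text{\rm qKB}}(E,Y)$. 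Thus every Cauchy sequence in $\text{\rm L}_{\text{\rm qKB}}(E,Y)$ converges within $\text{\rm L}_{\text{\rm qKB}}(E,Y)$, which is exactly the assertion.

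There is essentially no obstacle: the mathematical content has been entirely absorbed into Lemma \ref{prop2} and Lemma \ref{L2}. The only point meriting a moment's care is to confirm that the closedness statement applies to sequential limits, but Lemma \ref{L2} is in fact phrased for nets $(T_\gamma)$, so a fortiori it covers the sequential case needed for completeness.
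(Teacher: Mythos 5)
Your proposal is correct and coincides with the paper's own proof: it invokes Lemma \ref{prop2}$(i)$ for the vector space structure and Lemma \ref{L2} for norm-closedness in the Banach space $\text{\rm L}(E,Y)$, then concludes by the standard fact that a closed subspace of a Banach space is complete.
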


\begin{proof}
By Lemma \ref{prop2}, $\text{\rm L}_{\text{\rm qKB}}(E,Y)$ is a vector space.
By Lemma \ref{L2}, $\text{\rm L}_{\text{\rm qKB}}(E,Y)$ is a closed subspace of 
$\text{\rm L}(E,Y)$, and hence is a Banach space.
\end{proof}

\noindent
The following example shows that $\text{\rm L}_{\text{\rm KB}}(c_0)$ and 
$\text{\rm L}_{\text{\rm Levi}}(c_0)$ are not closed in $\text{\rm L}(c_0)$ 
under the operator norm.

\begin{exam}\label{KB-not-compl}%%%OK
{\em
Consider $T\in\text{\rm L}(c_0)$
given by $Tx=\sum_{k=1}^\infty \frac{x_k}{k} e_k$ for $x\in c_0$,
and define a sequence $(T_n)$ of operators in $\text{\rm L}(c_0)$ by
$T_nx=\sum_{k=1}^n\frac{x_k}{k} e_k$. Then, we have the following.
\begin{enumerate}[$a)$]
\item\
Trivially,
$\text{\rm L}_{\text{\rm FR}}\bigl(c_0\bigl)\ni T_n\convn T$.
\item\
By Lemma \ref{prop1}, 
$T_n\in{\rm L}_{\text{\rm KB}}\bigl(c_0\bigl)\bigcap{\rm L}_{\text{\rm Levi}}\bigl(c_0\bigl)$.
\item\
By Example \ref{Example1 c_0}, 
$T\notin{\rm L}_{\text{\rm KB}}\bigl(c_0\bigl)\bigcup{\rm L}_{\text{\rm Levi}}\bigl(c_0\bigl)$.
\item\
It follows from $b)$ and $c)$ that both ${\rm L}_{\text{\rm KB}}\bigl(c_0\bigl)$ 
and ${\rm L}_{\text{\rm Levi}}\bigl(c_0\bigl)$ are not norm-closed in ${\rm L}\bigl(c_0\bigl)$.
\end{enumerate}
}
\end{exam}

\subsection{}
Note that both sets $\text{\rm L}_{\text{\rm KB}}(E,Y)$ 
and $\text{\rm L}_{\text{\rm Levi}}(E,F)$ are obviously closed under the scalar multiplication.
By Lem\-ma \ref{prop2}, $\text{\rm L}_{\text{\rm qKB}}(E,Y)$, $\text{\rm L}_{\text{\rm c-qLevi}}(E,F)$,
and $\text{\rm L}_{\text{\rm qLevi}}(E,F)$ are vector spaces.
The situation with \text{\rm KB} and \text{\rm Levi} operators is quite different.
The following example shows that even perturbation by a positive rank one operator
of a positive \text{\rm KB} operator belonging to norm closure of finite rank operators
need not to be a  \text{\rm KB} operator. 
For \text{\rm Levi} operators, see Example \ref{Levi are not VS} in Section 3.

\begin{exam}\label{KB are not VS}%%%OK
{\em
Let $f\in(\ell^\infty)'$ be a Banach limit.
Define operators $S,T\in\text{\rm L}(\ell^\infty)$ by
$$
   S\mathbf{a} =
   \sum_{n=2}^\infty\Bigl(\sum_{k=1}^{n-1}\frac{a_k}{2^k}\Bigl) e_n, \ \ \text{\rm and} \ \ \
   T\mathbf{a} = f(\mathbf{a})e_1,
$$
where $\mathbf{a}=\sum_{n=1}^\infty a_n e_n\in\ell^\infty$. Operator $T$ is 
of rank one, and hence is a \text{\rm KB} operator by Lemma \ref{prop1}. 
If $(B_{\ell^\infty})_+\ni\mathbf{x}_\alpha\uparrow$ then
$$
   (B_{\ell^\infty})_+\ni S\mathbf{x}_\alpha\uparrow\mathbf{y}:=
   \sum_{n=2}^\infty\Bigl(\sum_{k=1}^{n-1}\frac{1}{2^k}\sup_\alpha[x_\alpha]_k\Bigl) e_n. 
$$
It is straightforward to see
$\|S\mathbf{x}_\alpha-\mathbf{y}\|_\infty\to 0$.  
Therefore, $S\in\text{\rm L}_{\text{\rm KB}}(\ell^\infty)$.

\medskip
\noindent
Consider the sequence
$\mathbf{b}_n=\sum_{k=1}^n e_k+\frac{1}{2}\sum_{k=n+1}^\infty e_k\in\ell^\infty$.
Then $\mathbf{b}_n\uparrow\mathbb{1}=\sum_{k=1}^\infty e_k$. 
Clearly, $T(\mathbf{b}_n)\equiv\frac{1}{2}e_1$ and
$$
   S(\mathbf{b}_n)\convsn\mathbf{u}=
   \left(0, \ \frac{1}{2}, \ \frac{1}{2}+\frac{1}{4}, \ 
   \frac{1}{2}+\frac{1}{4}+\frac{1}{8}, \ldots \right),
$$
and hence $\|(S+T)(\mathbf{b}_n)-\mathbf{v}\|_\infty\to 0$, where
$$
   \mathbf{v}=\left(\frac{1}{2}, \ \frac{1}{2}, \ \frac{1}{2}+\frac{1}{4}, \ 
   \frac{1}{2}+\frac{1}{4}+\frac{1}{8}, \ldots \right).
$$
Next, we show $\mathbf{v}\notin(S+T)E$. 

Assume in contrary, $(S+T)\mathbf{a}=\mathbf{v}$ for some $\mathbf{a}\in\ell^\infty$. Then 
$$
   \|(S+T)(\mathbf{b}_n)-(S+T)\mathbf{a}\|_\infty\to 0.
$$
In other words, 
$
   S(\mathbf{b}_n)+\frac{1}{2}e_1\convsn S\mathbf{a}+T\mathbf{a}= 
   S\mathbf{a}+f(\mathbf{a})e_1.
$
As $[S\mathbf{y}]_1=0$ for all $\mathbf{y}\in\ell^\infty$, we conclude
$$
   f(\mathbf{a})=\frac{1}{2}.
   \eqno(2)
$$
Since $S(\mathbf{b}_n)\convsn\mathbf{u}$ 
then $S\mathbf{a}=\mathbf{u}$, 
and hence 
$$
   \left(0,\ \frac{1}{2}a_1, \ \frac{1}{2}a_1+\frac{1}{4}a_2, \
   \frac{1}{2}a_1+\frac{1}{4}a_2+\frac{1}{8}a_3, \ldots\right)=
$$
$$
   \left(0, \ \frac{1}{2}, \ \frac{1}{2}+\frac{1}{4}, \ 
   \frac{1}{2}+\frac{1}{4}+\frac{1}{8}, \ldots \right).
$$
By recursive computation, we obtain $\mathbf{a}=\left(1,1, 1, 1, \ldots\right)$.
Then $f(\mathbf{a})=1$, which is violation (2). The obtained contradiction 
proves $\mathbf{v}\notin(S+T)E$.

We have shown that there is no element $\mathbf{a}\in\ell^\infty$ satisfying
$\|(S+T)(\mathbf{b}_n)-(S+T)\mathbf{a}\|_\infty\to 0$. Therefore, 
$S+T\notin\text{\rm L}_{\text{\rm KB}}(\ell^\infty)$.

It is worth to note that $S$ lies in the closure of $\text{\rm FR}(\ell^\infty)$
in the operator norm, and hence $S\in\text{\rm K}(\ell^\infty)$. Indeed, 
$S=\sum_{m=1}^\infty S_m$, where 
$S_m\mathbf{a}=\frac{a_m}{2^m}\sum_{i=m+1}^\infty e_i$, $\|S_m\|=\frac{1}{2^m}$.

Also note that the operators $S$ and $T$ can be considered as operators belonging to 
$\text{\rm K}(\ell^\infty,c)$ or to $\text{\rm K}(c)$.
}
\end{exam}

\subsection{}
It is worth to note that, for an operator $T\in\text{\rm K}_+(E)$ 
and a bounded increasing net $(x_\alpha)$ in $E_+$, 
a vector $x\in E$ satisfying $\|Tx_\alpha-Tx\| \to 0$ doesn't have to 
be an upper bound of $(x_\alpha)$. Indeed, let $l$ be a positive 
functional on $c$ that assigns to any element of $c$ its limit. 
Define $T:c\to c$ by $Tx:=l(x)\cdot\sum_{k=1}^\infty e_k$.
By Lemma \ref{prop1}, $T$ is a \text{\rm KB} operator.
Take $x_n:=\sum_{k=1}^n e_k\in c$.
Then $0\le x_n\uparrow$ and $\|x_n\|\equiv 1$.
Yet, for each $x\in c$ satisfying $x_n\uparrow \le x$, we have
$\|Tx_n-Tx\|=\|Tx\| \ge 1$ for all $n\in\mathbb{N}$.

\medskip
The next result develops the idea of Example~\ref{Example1 c_0}.

\begin{theorem}\label{prop3}%%%OK
In a Banach lattice $E$ with order continuous norm 
the following conditions are equivalent. 
\begin{enumerate}[$i)$]
\item 
$E$ is a \text{\rm KB}-space.
\item  
Every bounded operator on $E$ is \text{\rm KB}.
\item  
Every positive operator on $E$ is \text{\rm KB}.
\item  
Every positive weakly compact operator on $E$ is \text{\rm KB}.
\item  
Every positive compact operator on $E$ is \text{\rm KB}.
\end{enumerate}
\end{theorem}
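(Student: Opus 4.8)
The plan is to prove the cycle $i)\Rightarrow ii)\Rightarrow iii)\Rightarrow iv)\Rightarrow v)\Rightarrow i)$. The implication $i)\Rightarrow ii)$ is exactly part $ii)$ of Lemma~\ref{prop2}, and $ii)\Rightarrow iii)\Rightarrow iv)\Rightarrow v)$ are trivial, since $\text{\rm K}_+(E)\subseteq\text{\rm W}_+(E)\subseteq\text{\rm L}_+(E)\subseteq\text{\rm L}(E)$ and requiring the \text{\rm KB}-property of every operator in a smaller class is a weaker condition. All the work is in $v)\Rightarrow i)$, which I would prove in contrapositive form: assuming that $E$ has order continuous norm but is not a \text{\rm KB}-space, I will exhibit a positive compact operator on $E$ that is not \text{\rm KB}. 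The operator will be a lattice-theoretic analogue of the diagonal operator of Example~\ref{Example1 c_0}.

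First I would collect the structural input. Order continuity of the norm makes $E$ Dedekind complete (\cite{Mey}), and since $E$ is not a \text{\rm KB}-space it contains a closed sublattice lattice-isomorphic to $c_0$ (\cite{Mey}). If $J\colon c_0\to E$ is the associated lattice isomorphism onto its range and $c_n:=Je_n$, then $(c_n)$ is a pairwise disjoint sequence in $E_+$ and there are constants $0<m\le M$ with $m\le\|c_n\|\le M$ and $\|\sum_{n=1}^N c_n\|\le M$ for all $n,N$; hence $w_N:=\sum_{n=1}^N c_n$ defines an increasing sequence in $E_+$ with $\sup_N\|w_N\|\le M$. By Dedekind completeness the principal band $B_n$ generated by $c_n$ is a projection band; let $P_n$ be the band projection onto $B_n$, so that $0\le P_n\le I_E$, $\sum_{k=1}^N P_k\le I_E$, and — the bands $B_n$ being pairwise disjoint — $P_nc_k=\delta_{nk}c_n$. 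Pick a positive functional $\mu_n$ on the Banach lattice $B_n$ with $\mu_n(c_n)=1$ and $\|\mu_n\|\le m^{-1}$, and set $\lambda_n:=\mu_n\circ P_n\in E'_+$, so $\|\lambda_n\|\le m^{-1}$ and $\lambda_n(c_k)=\delta_{nk}$. The crucial point — and the one place where order continuity (rather than merely Dedekind completeness) is used — is that $\lambda_n(x)\to 0$ for every $x\in E$: for $x\in E_+$ the increasing sequence $\sum_{k=1}^N P_kx$ is dominated by $x$, hence increases to some $u\in E_+$, so $u-\sum_{k=1}^N P_kx\downarrow 0$ and therefore $\sum_{k=1}^N P_kx\convn u$ by order continuity, whence $\|P_nx\|\to 0$; splitting $x=x^+-x^-$ gives $\|P_nx\|\to 0$ for arbitrary $x$, and then $|\lambda_n(x)|\le m^{-1}\|P_nx\|\to 0$.

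Now fix scalars $\alpha_n>0$ with $\sum_n\alpha_n<\infty$ and define $T\in\text{\rm L}(E)$ by $Tx:=\sum_{n=1}^\infty\alpha_n\lambda_n(x)\,c_n$. Since $\sum_n\alpha_n\|\lambda_n\|\,\|c_n\|\le m^{-1}M\sum_n\alpha_n<\infty$, the series converges in norm and $T$ is the operator-norm limit of the finite-rank positive operators $\sum_{n=1}^N\alpha_n(\lambda_n\otimes c_n)$, so $T\in\text{\rm K}_+(E)$. Using $\lambda_n(w_N)=\sum_{k=1}^N\lambda_n(c_k)$ we obtain $Tw_N=\sum_{n=1}^N\alpha_nc_n\convn y:=\sum_{n=1}^\infty\alpha_nc_n\in E$, and $(M^{-1}w_N)$ is an increasing sequence in $(B_E)_+$ with $T(M^{-1}w_N)\convn M^{-1}y$. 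If $T$ were \text{\rm KB}, then applying the definition to the sequence $(M^{-1}w_N)$ and using uniqueness of norm limits would produce $a\in E$ with $Ta=M^{-1}y$, i.e.\ $T(Ma)=y$; applying $\lambda_k$ to this identity and using $\lambda_k(c_n)=\delta_{kn}$ yields $\alpha_k\lambda_k(Ma)=\alpha_k$, hence $\lambda_k(Ma)=1$ for every $k$ — contradicting $\lambda_k(Ma)\to 0$. Thus $T$ is not \text{\rm KB}, so $v)$ fails, which completes the cycle.

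I expect the main obstacle to be the construction of the positive functionals $\lambda_n$ together with their vanishing property: a positive functional on a sublattice need not extend to a positive functional on $E$, so one is forced to route through the principal band projections $P_n$, which are available precisely because order continuity of the norm entails Dedekind completeness; the verification that $\|P_nx\|\to 0$ for all $x$ is exactly where the order continuity hypothesis is spent. With $(\lambda_n)$ in hand, the compactness of $T$ and the imitation of Example~\ref{Example1 c_0} are routine.
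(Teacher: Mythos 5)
Your proposal is correct and follows essentially the same route as the paper: extract a disjoint positive sequence from a lattice copy of $c_0$, build a positive compact ``diagonal'' operator from band projections onto the principal bands composed with positive functionals normalized on the disjoint vectors, and use order continuity of the norm to show that the band components of any would-be preimage must tend to zero in norm, contradicting the normalization. The only cosmetic differences are that you take summable weights and get compactness as a norm limit of finite-rank operators (the paper uses weights $1/k$ and the compactness of order intervals in $c_0$), and you phrase the final contradiction through the functionals $\lambda_k$ rather than through $\|P_ny\|$ directly.
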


\begin{proof}
Implications $i)\Longrightarrow ii)\Longrightarrow iii)\Longrightarrow iv)\Longrightarrow v)$ 
are obvious.

$v) \Longrightarrow i)$ \ 
Assume, in contrary, that $E$ is not a \text{\rm KB}-space. 
Then there is a homeomorphic lattice embedding 
$c_0 \stackrel{i}{\longrightarrow}E$, say 
$i\Bigl(\sum\limits_{k=1}^\infty a_k \cdot e_k\Bigl)=\sum\limits_{k=1}^\infty a_k \cdot u_k$.
WLOG, assume $(u_k)$ to be a disjoint normalized sequence in $E_+$. 
Let $f_k$ be norm-one functionals on $E$ such that $f_k(u_k)=1$ for all $k\in\mathbb{N}$.
Then $|f_k|$ are norm-one positive functionals with 
$|f_k|(u_k)=1$ for all $k$.
Indeed, $\||f_k|\|=\|f_k\|=1$ and 
$$
   1=|f_k(u_k)|\le|f_k|(u_k)=\sup\{|f_k(x)|: |x|\le u_k\}\le
$$
$$
   \sup\{|f_k(x)|:\|x\|\le 1\}=\|f_k\|=1 \ \ \ \ \ \ (k\in\mathbb{N}).
$$
Let $P_k$ be the band projection onto the principle 
band $B_{u_k}$ of $E$ generated by $u_k$. Define an
operator $T\in\text{\rm L}_+(E)$ by
$$
   Tx=\sum_{k=1}^\infty\frac{|f_k|(P_kx)}{k}u_k . 
   \eqno(3)
$$
Since $||f_k|(P_kx)|\le\||f_k|\|\cdot\|P_kx\|\le\|x\|$ for all $k\in\mathbb{N}$,
the formula (3) implies
$$
   T(B_E)\subseteq i([-z,z]),  
   \eqno(4)
$$
where $z:=\sum\limits_{k=1}^\infty \frac{1}{k}\cdot e_k\in c_0$.
As order intervals of $c_0$ are compact,
the set $i([-z,z])$ is compact in $E$.
It follows from (4) that $T$ is a compact operator. 
By the condition~$v)$, $T$ is a \text{\rm KB} operator. 
Consider an increasing bounded sequence $y_n=\sum\limits_{k=1}^n u_k$ in $E$.
Then there is some $y\in E$ such that $\|Ty_n-Ty\|\to 0$. Since 
$$
   Ty_n=\sum_{k=1}^n\frac{|f_k|(P_ky_n)}{k}u_k=
   \sum_{k=1}^n\frac{|f_k|(u_k)}{k}u_k=
   \sum_{k=1}^n\frac{1}{k}u_k 
$$
for all $n\in\mathbb{N}$, $Ty=\sum_{k=1}^\infty\frac{1}{k}u_k$. 
Let $w_n=\sum_{k=1}^n P_ky$. Then $w_n\convo y$.
Since the norm in $E$ is order continuous, $\|w_n-y\|\to 0$, and hence
$(w_n)$ is norm-Cauchy.
It follows from $\|Ty_n-Ty\|\to 0$, $Ty_n=\sum_{k=1}^n\frac{1}{k}u_k$, and
$Ty=\sum_{k=1}^\infty\frac{1}{k}u_k$ that $|f_n|(P_ny_n)=|f_n|(P_ny)$.
Then
$$
   1=|f_n|(u_n)=|f_n|(P_ny_n)=|f_n|(P_ny)\le\|P_ny\|,
$$
violating $\|P_ny\|=\|w_{n}-w_{n-1}\|\to 0$.
The obtained contradiction completes the proof.
\end{proof}
\noindent
The author doesn't know whether the condition of order continuity 
of the norm in Theorem \ref{prop3} can be weakened.
\medskip

\begin{theorem}\label{cor1}%%%OK
Let $E$ be a Banach lattice with order continuous norm.
The following conditions are equivalent. 
\begin{enumerate}[$i)$]
\item 
$E$ is a \text{\rm KB}-space.
\item 
$\text{\rm L}(E,Y)=\text{\rm L}_{\text{\rm KB}}(E,Y)$ for each $Y$.
\item 
$\text{\rm L}_{\text{\rm qKB}}(E,Y)=\text{\rm L}_{\text{\rm KB}}(E,Y)$ for each $Y$.
\item  
$\text{\rm L}_{\text{\rm qKB+}}(E,F)=\text{\rm L}_{\text{\rm KB+}}(E,F)$ for each $F$.
%\item  
%$\text{\rm span}\bigl(\text{\rm L}_{\text{\rm qKB+}}(E,F)\bigl)=
%\text{\rm span}\bigl(\text{\rm L}_{\text{\rm KB+}}(E,F)\bigl)$ for each $F$.
\item  
$\text{\rm L}_{\text{\rm qKB+}}(E)=\text{\rm L}_{\text{\rm KB+}}(E)$.
\item 
$\text{\rm K}_+(E)\subseteq\text{\rm L}_{\text{\rm KB}}(E)$.
\end{enumerate}
\end{theorem}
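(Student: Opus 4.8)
The plan is to establish the cycle
$i)\Rightarrow ii)\Rightarrow iii)\Rightarrow iv)\Rightarrow v)\Rightarrow vi)\Rightarrow i)$,
in which every implication except the last is a short formal manipulation of the defining operator classes together with the lemmas of the introduction, while the last one is precisely the hard equivalence $v)\Leftrightarrow i)$ of Theorem \ref{prop3}.

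First, $i)\Rightarrow ii)$ is nothing but Lemma \ref{prop2}$(ii)$. For $ii)\Rightarrow iii)$ I would use the always-valid inclusions $\text{\rm L}_{\text{\rm KB}}(E,Y)\subseteq\text{\rm L}_{\text{\rm qKB}}(E,Y)\subseteq\text{\rm L}(E,Y)$ (a norm-convergent net is in particular norm-convergent, so a \text{\rm KB} operator is quasi-\text{\rm KB}); under $ii)$ the two outer classes coincide, so all three agree, which is $iii)$. Then $iii)\Rightarrow iv)$ follows by taking $Y=F$ and intersecting both sides with $\text{\rm L}_+(E,F)$, and $iv)\Rightarrow v)$ is the specialization $F=E$. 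For $v)\Rightarrow vi)$ I would apply Lemma \ref{prop2}$(v)$, namely $\text{\rm K}_+(E)\subseteq\text{\rm L}_{\text{\rm qKB+}}(E)$; by $v)$ the right-hand side equals $\text{\rm L}_{\text{\rm KB+}}(E)$, which sits inside $\text{\rm L}_{\text{\rm KB}}(E)$, yielding $\text{\rm K}_+(E)\subseteq\text{\rm L}_{\text{\rm KB}}(E)$, i.e.\ $vi)$.

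It remains to close the loop with $vi)\Rightarrow i)$. Here I would simply observe that condition $vi)$ is verbatim condition $v)$ of Theorem \ref{prop3} ("every positive compact operator on $E$ is \text{\rm KB}"), and since $E$ has order continuous norm, Theorem \ref{prop3} forces $E$ to be a \text{\rm KB}-space. Thus the only step with genuine content — the homeomorphic lattice embedding of $c_0$ into $E$ and the diagonal-type compact operator $(3)$ — has already been carried out in Theorem \ref{prop3}, and that is also the only place where order continuity of the norm enters; the present statement is essentially a repackaging of that theorem, the remaining work being bookkeeping with the inclusions $\text{\rm L}_{\text{\rm FR}}\subseteq\text{\rm L}_{\text{\rm KB}}\subseteq\text{\rm L}_{\text{\rm qKB}}\subseteq\text{\rm L}$ and $\text{\rm K}_+\subseteq\text{\rm L}_{\text{\rm qKB+}}$. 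Consequently I do not expect a real obstacle inside this proof; the one subtle point is just to route the chain through $vi)$ so that Theorem \ref{prop3} can be invoked directly.
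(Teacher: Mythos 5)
Your proposal is correct and follows exactly the paper's route: the same cycle $i)\Rightarrow ii)\Rightarrow\cdots\Rightarrow vi)\Rightarrow i)$, with the first four implications handled by the trivial inclusions, $v)\Rightarrow vi)$ by Lemma \ref{prop2}$(v)$, and $vi)\Rightarrow i)$ by quoting Theorem \ref{prop3}. The paper simply labels the first chain ``obvious'' where you spell it out; there is no substantive difference.
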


\begin{proof}
$i) \Longrightarrow ii) \Longrightarrow iii) \Longrightarrow iv)\Longrightarrow v)$ \
is obvious.\\
$v) \Longrightarrow vi)$ \
Let $T\in\text{\rm K}_+(E)$. Then
$T\in\text{\rm L}_{\text{\rm qKB+}}(E)$ by Lemma \ref{prop2},
and hence 
$T\in\text{\rm L}_{\text{\rm KB}}(E)$.\\
$vi) \Longrightarrow i)$ follows from Theorem \ref{prop3}.
\end{proof}

\subsection{}
By \cite[Thm.2.6]{AlEG} quasi-KB operators satisfy the domination property,
that is, for each positive quasi-KB operator $S$, every operator $T$ such 
that $0\le T\le S$ is also quasi-KB. The following example (which is motivated by
Example \ref{Example1 c_0}) shows that generally KB operators do not
satisfy the domination property.

\begin{exam}\label{domination for KB fails}%%%OK
{\em
Define operators $S,T\in\text{\rm L}(c)$ by
$$
   S\left(\sum_{n=1}^\infty a_n e_n\right)=\sum_{n=1}^\infty \frac{a_n}{2^n} e_n, \ \ \text{\rm and}
$$
$$
   T\left(\sum_{n=1}^\infty a_n e_n\right)=
   \sum_{n=1}^\infty\Bigl(\sum_{k=1}^\infty\frac{a_k}{2^k}\Bigl) e_n.
$$
Clearly, $0\le S\le T$. The operator $T$ is of rank one, and hence  
$T\in\text{\rm L}_{\text{\rm KB}}(c_0)$ by Lemma \ref{prop1}. But  
$S\notin\text{\rm L}_{\text{\rm KB}}(c_0)$ by Example \ref{Example1 c_0}.
}
\end{exam}

%%%%%%%%%%%%%%%%%%%%
\section{Levi operators}
%%%%%%%%%%%%%%%%%%%%

\text{\rm Levi} operators appear as 
operator versions of Banach lattices with Levi norms. 
In this section, we study some relations between Levi operators and 
other operators on Banach lattices.

\subsection{}
Zhang and Chen proved recently in \cite[Thm.2.2]{ZC} 
that every \text{\rm Levi} operator in the sense \cite[Def.1.1]{ZC}
is order bounded (in their proof, they understand under the order convergence 
the so-convergence of Definition \ref{order convergence}). 
In our terminology, \cite[Thm.2.2]{ZC} tell us that 
$\text{\rm L}_{\text{\rm c-qLevi}}(E,F)\subseteq\text{\rm L}_r(E,F)$,
whenever $F$ is Dedekind complete. We extend the Zhang - Chen
result as follows.
  
\begin{theorem}\label{T5}%%%OK
Every quasi Levi operator is order bounded.
\end{theorem}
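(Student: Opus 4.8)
The plan is to show that if $T \in \text{\rm L}_{\text{\rm qLevi}}(E,F)$ then $T$ maps the order interval $[0,u]$ of $E$ into an order-bounded subset of $F$ for every $u \in E_+$; since a positive element $u$ dominates $T(B_E)_+ \cap [0,u]$ and every element of $E$ is a difference of positive ones, this yields order boundedness of $T$ on all of $E$ (recall $T$ is order bounded iff $T[0,u]$ is order bounded in $F$ for each $u\in E_+$). The heart of the matter is therefore: for a fixed $u \in E_+$, find an element $w \in F_+$ with $|Tx| \le w$ for all $x \in [0,u]$.

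First I would argue by contradiction: suppose $T[0,u]$ is not order bounded in $F$. The strategy is to build, inductively, an increasing net (in fact a sequence suffices, working with $[0,u]$) in $(B_E)_+$ whose image under $T$ fails to be o-Cauchy, contradicting membership in $\text{\rm L}_{\text{\rm qLevi}}(E,F)$. Here I would follow the classical pattern used for b-weakly compact / Levi-type operators: if $T[0,u]$ has no upper bound, then one can select $x_1 \le x_2 \le \cdots$ in $[0,u]$ (hence in a scalar multiple of $B_E$, which we may normalize to $(B_E)_+$) such that the "jumps" $T(x_{n+1}) - T(x_n) = T(x_{n+1}-x_n)$ remain, in a suitable sense, far from being controlled by a vanishing net. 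The key device is that failure of order boundedness lets us find elements $v_n \in [0,u]$ with $Tv_n$ having arbitrarily large components in directions that cannot be simultaneously dominated; replacing $v_n$ by partial suprema $x_n := v_1 \vee \cdots \vee v_n \in [0,u]$ (using that $[0,u]$ is a sublattice) produces an increasing sequence in $[0,u]$, and one checks that $(Tx_n)$ cannot be o-Cauchy because an o-Cauchy increasing net of positive-difference type would be order bounded above, forcing $T[0,u]$ to be order bounded after all.

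The step I expect to be the main obstacle is making the selection of the $x_n$ rigorous without assuming $F$ Dedekind complete — the Zhang–Chen argument in \cite[Thm.2.2]{ZC} has the comfort of suprema existing in $F$, which I do not. The way around this is to exploit the definition of o-convergence directly: an increasing net $(z_\alpha)$ in $F_+$ that is o-Cauchy means $z_\alpha - z_\beta \to 0$ in order, so there is a net $(y_\gamma)$ with $y_\gamma \downarrow 0$ eventually dominating all the tails $z_\alpha - z_\beta$; from this one extracts that $(z_\alpha)$ is bounded above by $z_{\alpha_0} + y_{\gamma_0}$ for appropriate indices, hence order bounded — no Dedekind completeness needed. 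Applying this to $z_n = Tx_n$ (after passing to the increasing rearrangement and subtracting $Tx_1$ to land in $F_+$) gives the contradiction: $T[0,u]$ would be order bounded, against the hypothesis. Assembling these pieces — reduction to intervals, contradiction setup, the inductive construction, and the "o-Cauchy increasing $\Rightarrow$ order bounded" lemma — completes the proof.
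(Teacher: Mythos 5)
Your plan contains two correct ingredients: the reduction to showing that $T[0,u]$ is order bounded for each $u\in E_+$, and the sub-lemma that an \emph{increasing} o-Cauchy net $(z_\alpha)$ in $F$ satisfies $z_\alpha\le z_{\delta_0}+y_{\gamma_0}$ for all $\alpha$ (monotonicity pushes every term into the controlled tail, so no Dedekind completeness is needed there). But the argument as assembled has two genuine gaps. First, the contradiction cannot be run with a sequence: failure of order boundedness of $T[0,u]$ need not be witnessed by any countable subset. In $F=\ell^\infty_\omega(\Gamma)$ (bounded countably supported functions on an uncountable $\Gamma$, as in the paper's introduction) the set $\{1_{\{\gamma\}}:\gamma\in\Gamma\}$ is not order bounded although every countable subset of it is; so ``in fact a sequence suffices'' is false, and the inductive selection of the $v_n$ ``with arbitrarily large components'' --- which is in any case never actually specified --- has no way to encode unboundedness of the whole image. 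The only net that automatically witnesses everything is the tautological one indexed by $[0,u]$ itself, $b_\alpha:=\alpha$; once you use that net the contradiction scaffolding becomes unnecessary and the argument is direct, which is exactly what the paper does.

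Second, your closing step applies the ``increasing o-Cauchy $\Rightarrow$ order bounded above'' lemma to $z_n=Tx_n$, but $(Tx_n)$ is increasing only when $T\ge 0$, and positivity is not a hypothesis (order boundedness of $T$ is the conclusion, so you cannot presuppose any form of it); ``passing to the increasing rearrangement'' of a net in a vector lattice is not a meaningful operation. For a non-monotone o-Cauchy net the domination $|z_\alpha-z_\beta|\le y_\gamma$ holds only eventually, so it controls a tail but not every term. The paper resolves this by embedding $F$ into its Dedekind completion $F^\delta$ (majorizing, so order boundedness there suffices), where the o-limit $z$ of $(Tb_\alpha)$ exists and where, by \cite[Prop.1.5]{AS}, o-convergence upgrades to so-convergence: $|Tb_\alpha-z|\le y_\alpha\downarrow 0$ with the inequality holding at \emph{every} index, whence $|T\alpha|\le|z|+y_0$ for all $\alpha\in[0,u]$ because $0$ is the least index of the net. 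Some device of this kind is indispensable for non-positive $T$, and it is absent from your proposal.
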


\begin{proof}
Let $T\in\text{\rm L}_{\text{\rm qLevi}}(E,F)$,
and let $[0,b]$ be an order interval in $E$. 
Denote by $F^\delta$ the Dedekind completion 
of the vector lattice $F$. WLOG suppose that
$F$ is a vector sublattice of $F^\delta$. Since $F$ is mazorizing 
in $F^\delta$, it suffices to prove order boundedness of $T[0,b]$ 
in $F^\delta$. 

As in the proof of \cite[Thm.2.2]{ZC}, consider an increasing bounded net
in $E_+$ indexed by the upward directed set $[0,b]$ via
$b_\alpha=\alpha$ for each $\alpha\in [0,b]$. Since $T$ is quasi Levi,
the net $(Tb_\alpha)$ is \text{\rm o}-Cauchy in $F$, and hence in $F^\delta$.
As every Dedekind complete vector lattice is order complete,
there exists $z\in F^\delta$ satisfying $Tb_\alpha\convo z$ in $F^\delta$.

Since $(F^\delta)^\delta=F^\delta$ then $Tb_\alpha\convso z$ in $F^\delta$
by \cite[Prop.1.5]{AS}. Then $|T\alpha-z|=|Tb_\alpha-z|\le y_\alpha\downarrow 0$
for a net $(y_\alpha)$ in $F^\delta$. Thus, $|T\alpha-z|\le y_0$, and hence
$|T\alpha|\le|z|+y_0$ for all $\alpha\in [0,b]$. Therefore,
$T[0,b]\subseteq[-(|z|+y_0),(|z|+y_0)]$, as desired.
\end{proof}

\subsection{}
Our next result is a Levi-version of Theorem \ref{cor1}. 

\begin{theorem}\label{prop8}%%%OK
For a Banach lattice $F$ with order continuous norm 
the following conditions are equivalent. 
\begin{enumerate}[$i)$]
\item 
$F$ is a \text{\rm KB}-space.
\item  
$\text{\rm L}_+(F)=\text{\rm L}_{\text{\rm Levi+}}(F)$.
\item 
$\text{\rm L}_{\text{\rm qLevi+}}(F)=\text{\rm L}_{\text{\rm Levi+}}(F)$.
\item 
$\text{\rm L}_{\text{\rm c-qLevi+}}(F)=\text{\rm L}_{\text{\rm Levi+}}(F)$.
\item 
$\text{\rm K}_+(F)\subseteq\text{\rm L}_{\text{\rm Levi}}(F)$.
\end{enumerate}
\end{theorem}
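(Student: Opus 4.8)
The plan is to establish the cycle $i)\Rightarrow ii)\Rightarrow iii)\Rightarrow iv)\Rightarrow v)\Rightarrow i)$, mirroring the structure of Theorem~\ref{cor1} but working throughout with order convergence in $F$ rather than norm convergence into an arbitrary $Y$. The key point that makes the equivalence run is that in a Banach lattice with order continuous norm, \emph{norm} convergence of an increasing net and \emph{order} convergence of that net to the same limit are interchangeable for bounded increasing nets; this is exactly what lets a KB-space hypothesis on $F$ feed positive operators into $\text{\rm L}_{\text{\rm Levi+}}(F)$.

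\textbf{The easy implications.} For $i)\Rightarrow ii)$: if $F$ is a KB-space and $T\in\text{\rm L}_+(F)$, take an increasing net $(x_\alpha)$ in $(B_F)_+$; then $(Tx_\alpha)$ is increasing and norm bounded, so by the KB property it converges in norm to some $y=Tx\in F$ (the limit lies in $\overline{TF}$, but since the net is increasing and norm convergent, $y=\sup_\alpha Tx_\alpha$ exists in $F$, and because $T$ is positive we may take $x$ to be any upper bound... more carefully, $y$ itself need not be in $TF$, so instead argue: $F$ being a KB-space means every increasing norm-bounded positive net is norm convergent, hence $(x_\alpha)$ itself is norm convergent to some $x\in F$ with $x_\alpha\uparrow x$, and then $Tx_\alpha\to Tx$ in norm, hence in order since the norm is order continuous — this is the clean route). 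So $T\in\text{\rm L}_{\text{\rm Levi}}(F)$. The implications $ii)\Rightarrow iii)\Rightarrow iv)$ are immediate from the inclusions $\text{\rm L}_{\text{\rm Levi+}}\subseteq\text{\rm L}_{\text{\rm c-qLevi+}}\subseteq\text{\rm L}_{\text{\rm qLevi+}}$ displayed in the introduction (each chain of equalities forces the previous one). For $iv)\Rightarrow v)$: by Lemma~\ref{prop2}$(v)$, $\text{\rm K}_+(F)\subseteq\text{\rm L}_{\text{\rm qKB+}}(F)\subseteq\text{\rm L}_{\text{\rm c-qLevi+}}(F)$ (the second inclusion is Lemma~\ref{prop2}$(iv)$), so the hypothesis $\text{\rm L}_{\text{\rm c-qLevi+}}(F)=\text{\rm L}_{\text{\rm Levi+}}(F)$ gives $\text{\rm K}_+(F)\subseteq\text{\rm L}_{\text{\rm Levi}}(F)$.

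\textbf{The main step, $v)\Rightarrow i)$.} This is where the work lies and where I expect the obstacle. Suppose $F$ is not a KB-space; I want to produce a positive compact operator on $F$ that is not Levi. I would reuse verbatim the construction from the proof of Theorem~\ref{prop3}: fix a lattice homeomorphic copy of $c_0$ inside $F$ spanned by a disjoint normalized sequence $(u_k)\subseteq F_+$, take norm-one positive functionals $|f_k|$ with $|f_k|(u_k)=1$, let $P_k$ be the band projection onto $B_{u_k}$, and define $Tx=\sum_{k=1}^\infty k^{-1}|f_k|(P_kx)\,u_k$. Exactly as before, $T(B_F)\subseteq i([-z,z])$ with $z=\sum_k k^{-1}e_k\in c_0$, so $T$ is positive and compact. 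By hypothesis $v)$, $T$ would be Levi, so for the increasing bounded sequence $y_n=\sum_{k=1}^n u_k$ there is $y\in F$ with $Ty_n\stackrel{\text{\rm o}}{\to}Ty$. The remaining task is to derive a contradiction from this; here I need to convert order convergence into something usable. Since $(Ty_n)$ is increasing and $Ty_n=\sum_{k=1}^n k^{-1}u_k$, order convergence forces $Ty=\sup_n Ty_n=\sum_{k=1}^\infty k^{-1}u_k$. From $Ty=\sum_k k^{-1}|f_k|(P_ky)u_k$ and disjointness of the $(u_k)$, we get $|f_k|(P_ky)=1=|f_k|(u_k)$ for every $k$, hence $\|P_ky\|\ge 1$ for all $k$. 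But $w_n:=\sum_{k=1}^n P_ky\stackrel{\text{\rm o}}{\to}y$ (the $P_ky$ are disjoint pieces of $y$ sitting in the band generated by $i(c_0)$... more precisely $w_n$ is an increasing sequence of positive parts dominated by $|y|$, order convergent by order continuity arguments), and by order continuity of the norm $\|w_n-y\|\to 0$, so $(w_n)$ is norm Cauchy, forcing $\|P_ny\|=\|w_n-w_{n-1}\|\to 0$ — contradicting $\|P_ky\|\ge 1$. The obstacle is making the passage ``$Ty_n\stackrel{\text{\rm o}}{\to}Ty\Rightarrow Ty=\sup_n Ty_n$'' and the sequence $(w_n)$ rigorous; both follow from standard facts about order convergence of monotone nets (the order limit of an increasing net is its supremum) plus order continuity of the norm, so I do not expect a genuine difficulty, only care. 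Finally, $i)\Rightarrow ii)$ closes the cycle, completing the proof.
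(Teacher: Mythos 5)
Your proof is correct and the easy implications coincide with the paper's. The one genuine divergence is in $v)\Rightarrow i)$: where you re-run the whole $c_0$-embedding construction of Theorem~\ref{prop3} with order convergence in place of norm convergence, the paper instead makes a two-line reduction — if $T\in\text{\rm K}_+(F)$ and $(B_F)_+\ni x_\alpha\uparrow$, hypothesis $v)$ gives $Tx_\alpha\stackrel{\text{\rm o}}{\to}Tx$ for some $x$, and order continuity of the norm upgrades this to $\|Tx_\alpha-Tx\|\to 0$; hence $\text{\rm K}_+(F)\subseteq\text{\rm L}_{\text{\rm KB}}(F)$, and Theorem~\ref{cor1} (condition $vi)$) finishes. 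Your inlined version does work — the passage ``increasing net order converges $\Rightarrow$ its limit is its supremum'' and the treatment of $w_n=\sum_{k=1}^n P_ky$ go through exactly as in the paper's proof of Theorem~\ref{prop3} (note only that $w_n$ order converges to the band projection of $y$ onto the band generated by the $u_k$, not necessarily to $y$ itself; the contradiction $\|P_ny\|\to 0$ versus $\|P_ny\|\ge 1$ is unaffected) — but it duplicates roughly a page of argument that the reduction makes unnecessary. One small misattribution in your $i)\Rightarrow ii)$: you justify ``$Tx_\alpha\to Tx$ in norm, hence in order'' by order continuity of the norm, but norm convergence does not imply order convergence in general, even under order continuity; the correct reason (the one used in Lemma~\ref{prop2}$(iii)$) is that $(Tx_\alpha)$ is \emph{increasing}, and an increasing norm-convergent net order converges to its norm limit. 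Since your net is indeed increasing, this is a wrong citation rather than a gap.
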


\begin{proof}
$i) \Longrightarrow ii)$.
By items $ii)$ and $iii)$ of Lemma \ref{prop2}, 
$$
   \text{\rm L}_+(F)\subseteq\text{\rm L}_{\text{\rm KB+}}(E,F)\subseteq
   \text{\rm L}_{\text{\rm Levi+}}(E,F)\subseteq\text{\rm L}_+(F).
$$

Implications $ii)\Longrightarrow iii)\Longrightarrow iv)$ are obvious.

$iv) \Longrightarrow v)$ follows from item $v)$ of Lemma \ref{prop2}.

$v) \Longrightarrow i)$.
First, we show $\text{\rm K}_+(F)\subseteq\text{\rm L}_{\text{\rm KB}}(F)$.
Let $T\in\text{\rm K}_+(F)$ and $(B_F)_+\ni x_\alpha\uparrow$.
By the assumption, there exists an $x\in F$ such that $Tx_\alpha\convo Tx$.
Since the norm in $F$ is order continuous, $Tx_\alpha\convn Tx$
and hence $T\in\text{\rm L}_{\text{\rm KB}}(F)$.

Now, Theorem \ref{cor1} implies that $F$ is a \text{\rm KB}-space.
\end{proof}

\subsection{}
By Example \ref{KB-not-compl},  $\text{\rm L}_{\text{\rm Levi}}(c_0)$ 
is not closed in $\text{\rm L}(c_0)$ under the operator norm.
In contrast to the fact that the quasi {\text{\rm KB}} operators are always
closed by Lemma \ref{L2}, the next example shows that the complete
quasi {\text{\rm Levi}} and quasi {\text{\rm Levi}} operators are not
in general.

\begin{exam}\label{qLevi-not-compl}%%%OK
{\em 
Define $S_n\in\text{\rm L}_{\text{\rm FR}}\bigl((\oplus_{n=1}^{\infty}\ell^2_{2^n})_0\bigl)$ by
$$
   S_nx:=(2^{-\frac{1}{3}}T_1x_1,2^{-\frac{2}{3}}T_2x_2,\dots 2^{-\frac{n}{3}}T_nx_n,0,0,\dots),
$$
where $T_n:\ell^2_{2^n}\to\ell^2_{2^n}$ are taken as in \cite[Ex.5.6]{AB}.
Then, $S_n\in\text{\rm L}_{\text{\rm Levi}}\bigl((\oplus_{n=1}^{\infty}\ell^2_{2^n})_0\bigl)$
by Lemma \ref{prop1}, and hence operators $S_n$ are (complete) quasi {\text{\rm Levi}}.
Now, define an operator $S\in\text{\rm L}\bigl((\oplus_{n=1}^{\infty}\ell^2_{2^n})_0\bigl)$ by
$$
   Sx:=(2^{-\frac{1}{3}}T_1x_1,2^{-\frac{2}{3}}T_2x_2,\dots2^{-\frac{n}{3}}T_nx_n,\dots).
$$
Then $\|S_n-S\|\to 0$. The Banach lattice $(\oplus_{n=1}^{\infty}\ell^2_{2^n})_0$ 
has order continuous norm, and hence is Dedekind complete. However $|S|$ does not exist,
so $S$ is not order bounded. Thus, 
$S\notin\text{\rm L}_{\text{\rm qLevi}}\bigl((\oplus_{n=1}^{\infty}\ell^2_{2^n})_0\bigl)$
due to Theorem \ref{T5}.
}
\end{exam}

\medskip
Quasi-Levi operators satisfies the domination property by \cite[Thm.2.7]{AlEG}. 
The following example shows that, in general, neither
Levi nor $\sigma$-Levi operators satisfy the domination property.

\begin{exam}\label{domination for Levi fails}%%%OK
{\em
Consider operators $0\le S\le T\in\text{\rm L}(c)$ as in Example \ref{domination for KB fails}.
Lemma \ref{prop1} implies that $T\in\text{\rm L}_{\text{\rm Levi}}(c_0)$.
By Example \ref{Example1 c_0}, 
$S\in\text{\rm L}_{\text{\rm c-qLevi}}(c_0)\setminus\text{\rm L}^\sigma_{\text{\rm Levi}}(c_0)$.
}
\end{exam}
\noindent
The author does not know where or not the complete quasi-Levi operators 
satisfy the domination property.

\subsection{}
We conclude the paper with a modification of Example \ref{KB are not VS} 
which shows that \text{\rm Levi} operator may also be not stable 
under positive rank one perturbations.

\begin{exam}\label{Levi are not VS}%%%OK
{\em
Define positive \text{\rm KB} operators $S,T\in\text{\rm L}(\ell^\infty)$ 
as in Example \ref{KB are not VS}. Then,
$S,T\in\text{\rm L}_{\text{\rm Levi}}(\ell^\infty)$ by Lemma \ref{prop2}.
Take again the increasing bounded sequence
$$
   \mathbf{b}_n=\sum_{k=1}^n e_k+\frac{1}{2}\sum_{k=n+1}^\infty e_k\in\ell^\infty.
$$
Then $T(\mathbf{b}_n)\equiv\frac{1}{2}e_1$ and $S(\mathbf{b}_n)\convsn\mathbf{u}$,
and hence $S(\mathbf{b}_n)\convo\mathbf{u}$, where
$$
   \mathbf{u}=
   \left(0, \ \frac{1}{2}, \ \frac{1}{2}+\frac{1}{4}, \ 
   \frac{1}{2}+\frac{1}{4}+\frac{1}{8}, \ldots \right),
$$
and hence $((S+T)(\mathbf{b}_n)-\mathbf{v})\convo 0$, where
$$
   \mathbf{v}=\left(\frac{1}{2}, \ \frac{1}{2}, \ \frac{1}{2}+\frac{1}{4}, \ 
   \frac{1}{2}+\frac{1}{4}+\frac{1}{8}, \ldots \right).
$$
Would be $\mathbf{v}\in(S+T)E$, say $(S+T)\mathbf{a}=\mathbf{v}$ 
for some $\mathbf{a}\in\ell^\infty$, then 
$(S+T)(\mathbf{b}_n)\convo(S+T)\mathbf{a}$.
In this case, 
$$
   S(\mathbf{b}_n)+\frac{1}{2}e_1\convo S\mathbf{a}+T\mathbf{a}= 
   S\mathbf{a}+f(\mathbf{a})e_1,
$$
where $f$ is a Banach limit as in Example \ref{KB are not VS}.
Again, we conclude $f(\mathbf{a})=\frac{1}{2}$.
Using $S(\mathbf{b}_n)\convo\mathbf{u}$, we obtain 
$S\mathbf{a}=\mathbf{u}$, 
and hence 
$$
   \left(0,\ \frac{1}{2}a_1, \ \frac{1}{2}a_1+\frac{1}{4}a_2, \
   \frac{1}{2}a_1+\frac{1}{4}a_2+\frac{1}{8}a_3, \ldots\right)=
$$
$$
   \left(0, \ \frac{1}{2}, \ \frac{1}{2}+\frac{1}{4}, \ 
   \frac{1}{2}+\frac{1}{4}+\frac{1}{8}, \ldots \right).
$$
By recursive computation, we have $\mathbf{a}=\left(1,1, 1, 1, \ldots\right)$.
Then $f(\mathbf{a})=1$, violating previously obtained 
$f(\mathbf{a})=\frac{1}{2}$. The obtained contradiction 
proves $\mathbf{v}\notin(S+T)E$. Thus,  
$S+T\notin\text{\rm L}_{\text{\rm Levi}}(\ell^\infty)$.
}
\end{exam}

{\normalsize 

}

\begin{thebibliography}{30}

\bibitem{AS}%%%OK
Abramovich, Y.A., Sirotkin, G.
On order convergence of nets. 
Positivity 9(3), 287--292 (2005).

\bibitem{AB}%%%OK
Aliprantis, C.D., Burkinshaw, O.
Locally Solid Riesz Spaces with Applications to Economics, 2nd edition. 
American Mathematical Society, Providence, RI (2003).

\bibitem{AA}%%%OK
Alpay, S., Altin, B.
A note on b-weakly compact operators. 
Positivity 11(4), 575--582 (2007).

\bibitem{AlEG}%%%OK
Alpay, S., Emelyanov, E., Gorokhova, S. 
$o\tau$-Continuous, Lebesgue, KB, and Levi Operators Between 
Vector Lattices and Topological Vector Spaces. 
Topological Vector Spaces. Results Math 77, 117, 25pp., (2022).

\bibitem{BA}%%%OK
Bahramnezhad, A., Azar, K.H.
KB-operators on Banach lattices and their relationships with 
Dunford-Pettis and order weakly compact operators. 
University Politehnica of Bucharest Scientific Bulletin 80(2), 91--98 (2018).

\bibitem{DEM}%%% OK
Dabboorasad, Y.A., 
Emelyanov, E.Y.,
Marabeh, M.A.A. 
Order convergence is not topological in infinite-dimensional vector lattices. 
Uzb. Mat. Zh. 1, 159--166 (2020).

\bibitem{GE}%%% OK
Gorokhova, S., Emelyanov, E.
On operators dominated by the Kantorovich-Banach and Levi 
operators in locally solid lattices. (English. Russian original)
Siberian Mathematical Journal 2023; 64 (3): 720-724 (in English). 
(Translation from Vladikavkazskii Matematicheskii Zhurnal 2022; 24 (3): 55-61 (in Russian)).

\bibitem{Mey}%%%OK
Meyer-Nieberg, P.
Banach Lattices. 
Universitext, Springer-Verlag, Berlin (1991).

\bibitem{ZC}%%%OK
Zhang, F., Chen, Z. 
Some results of ($\sigma$-)Levi operators in Banach lattices. 
Positivity 26, 49 (2022).
\end{thebibliography}
\end{document}